\newtheorem{thm}{Theorem}
\newtheorem{lem}{Criterion}
\newtheorem{Defi}{Definition}
\begin{document}

\title{The Global Convergence Analysis of the Bat Algorithm
 Using a Markovian Framework and Dynamical System Theory}

\author{Si Chen$^{1}$, Guo-Hua Peng$^1$ \\
Department of Mathematics, College of Science, \\
Northwestern Polytechnical University, Xi'an 710072, P. R. China. \\[10pt]
Xingshi He$^2$ \\
College of Science, Xi'an Polytechnic University, \\
No. 19 Jinhua South Road, Xi'an 710048, P. R. China \\[10pt]
Xin-She Yang$^3$ \\
School of Science and Technology, Middlesex University, \\
London NW4 4BT, UK.
}

\date{}

\maketitle

\begin{abstract}
The bat algorithm (BA) has been shown to be effective to solve a wider range of optimization problems. However, there is not much theoretical analysis concerning its convergence and stability. In order to prove the convergence of the bat algorithm, we have built a Markov model for the algorithm and proved that the state sequence of the bat population forms a finite homogeneous Markov chain, satisfying the global convergence criteria. Then, we prove that the bat algorithm can have global convergence. In addition, in order to enhance the convergence performance of the algorithm, we have designed an updated model using the dynamical system theory in terms of a dynamic matrix, and the parameter ranges for the algorithm stability are then obtained. We then use some benchmark functions to demonstrate that BA can indeed achieve global optimality efficiently for these functions. \\[10pt]

{\bf Keywords:}
Bat algorithm,  Global convergence, Markov chain theory, Dynamic matrix theory, Parameters selection, Optimization, Swarm intelligence.

\end{abstract}

\noindent {\bf Citation Details:} {\it Si Chen, Guo-Hua Peng, Xing-Shi He, Xin-She Yang, Global convergence analysis of the bat algorithm using a Markovian framework and dynamical system theory, Expert Systems with Applications, vol. 114, 173--182 (2018). }
https://doi.org/10.1016/j.eswa.2018.07.036

\section{Introduction}
With the development of computational intelligence~\cite{Engelbrecht,Yang,Jain,Akerkar}, nature-inspired algorithms have been shown to be effective and thus become widely used for various optimization problems~\cite{Kennedy,Koziel,Yang}. However, there is still a significant gap between theory and practice. Though the applications of algorithms are very successful, the relevant fundamental theory lacks behind or no theory at all.
For example, the bat algorithm (BA), developed by Xin-She Yang in 2010 \cite{Yang01,Yang02}, has been shown to very efficient in  practice, but there is no mathematical theory for analyzing this algorithm.  In fact, most of the swarm intelligence based algorithms for computational intelligence have no or little theoretical analyses, except for a few algorithms, such as the well known particle swarm optimization~\cite{Clerc,Jiang,Eberhart,Ren} and genetic algorithms~\cite{Deb,Leung}. Though we know these algorithms can work well in practice, we rarely understand why they work so well and under what conditions or parameter ranges. These key challenges require further in-depth theoretical studies.

Recent studies started to focus on this area and some preliminary results have been obtained. Sheng et al.~analyzed the convergence of BA algorithm according to the global convergence criterion of stochastic optimization algorithm \cite{Sheng}, while Li et al.~defined the two modes of speed and position updating for the bat algorithm, followed by the analysis of the two modes defined by the characteristic equation~\cite{ZLi}. Huang et al.~constructed a class of globally convergent BA algorithm to prove its global convergence, while applying it to solve large-scale optimization problems~\cite{Huang}. However, these studies have focused on the modified version of the bat algorithm, and there still lacks rigorous results about the standard bat algorithm, concerning both its convergence and stability.

Therefore, the main aim of this work is to prove the convergence of the standard bat algorithm using both Markov chain theory and then dynamic matric so as to gain insight into the working mechanisms of this algorithm. The paper is thus organized as follows. We will first introduce the basics of the BA in Section 2, followed by the introduction of the global convergence criteria of random search algorithms in Section 3. We will then build a proper Markov model for BA, and outline the main steps of proof of convergence in Section 4. A further updating model of the bat algorithm is defined in Section 5 so as to study the parameter variations and convergence conditions for the algorithm. The numerical experiments of some selected benchmark functions and their convergence behaviour are then presented in Section 6. Finaly, we draw sone brief conclusions in Section 7.

\section{Standard Bat Algorithm}
The standard bat algorithm was developed by Yang in 2010 \cite{Yang01} to solve continuous optimization problems. It has been extended to multiobjective optimization \cite{Yang02} with many different applications \cite{Gandomi,Natarajan,Yang03}.

The BA algorithm, inspired by the echolocation behavior of microbat species, is a population-based
algorithm using the frequency tuning with varying pulse emission rates and loudness so as to mimic the main nature of bats' echolocation when hunting for prey. The intention of BA is to act as a global optimizer using sufficient randomization and autoswitching between local and global moves, controlled by the actual emission rates and loudness of individuals \cite{Yang01}.

Based on the original bat algorithm \cite{Yang01}, each bat has a position vector $x_i^t$ and a flying velocity $v_i^t$ at iteration $t$ in a $d$-dimensional search space. Their main algorithmic equations can be written as follows:
\begin{eqnarray}
&& f_i=f_{min}+(f_{max}-f_{min})\beta, \\
&& v_i^{t+1}=\omega v_i^t+(p-x_i^t)f_i, \\
&& x_i^{t+1}=x_i^t+v_i^{t+1},
\end{eqnarray}
where $f_i$ is the acoustic frequency of the $i$-th bat in the range of [$f_{\min}, f_{\max}$]. Here, $\omega$ indicates the inertia weight in the update of velocity, and in the standard bat algorithm $\omega=1$ was used. For generality, we can use $\omega \in (0, 1)$. In addition, $\beta \in [0,1]$ is a uniformly distributed random variable and $p$ corresponds to the current best solution found by all the bats.

In a local search move, a new solution will be generated randomly around the old solution, often the current best solution. That is
\begin{eqnarray}
\textsl{X}_{new}=\textsl{X}_{old}+\varepsilon \textsl{A}^t,
\end{eqnarray}
where $\textsl{X}_old$ is a solution chosen from the current best solution set, $\textsl{A}^t$ is the mean of the bats' loudness at $t$, and $\varepsilon$ is the random number in $[-1,1]$. The loudness $A_i$ and the velocity $r_i$ of the bats can be updated as follows:
\begin{eqnarray}
\textsl{A}_i^{t+1}=\alpha \textsl{A}_i^t, \quad r_i^{t+1}=r_i^0[1-\exp(-\gamma t)],
\end{eqnarray}
where $\alpha$ and $\gamma$ are constants ($0<\alpha<1, 0<\gamma$).

For the convenience of the discussions below, we can rewrite the above equations (1) to (3) in the following form:
\begin{eqnarray}
v_{t+1}=\omega v_t+(p-x_t) w_i,
\end{eqnarray}
which is valid for each individual bat. Here, we use $w_i$ to denote frequencies to avoid potential confusion with the objective function $f(x)$ later. The position vectors can be updated iteratively as
\begin{eqnarray}
x_{t+1}=(1+\omega-f_{i,t})x_t+p w_i
\end{eqnarray}
The stopping condition is usually  the maximum number of iterations, or when the optimal value searched by the population satisfies the set minimum fitness value.

The bat algorithm has many variants with a diverse range of applications \cite{Akhtar,Natarajan,Gandomi,Baziar,Bora,Ramesh,Khan,Mishra,Kang,Tsai,Jaddi,Niknam,Yang01,Yang02}, such as fuzzy clustering, prediction, classification, image processing, feature selection, scheduling, and data mining.

\section{Convergence Criteria}
Depending on the actual algorithms and the framework of theoretical analysis, the convergence of an algorithm can be tested by certain criteria. One of the commonly used criteria is based on the two conditions outlined by Solis and Wets \cite{Solis}.

Let $\langle H,f\rangle$ be the optimization problem with a fitness function and a feasible solution space $H$. A stochastic optimization $S$ iterates for $t$ iterations and the new solution $x_{t+1}$
can be obtained from solution $x_t$ by
\begin{equation}
 x_{t+1} =S (x_t, \zeta),
\end{equation}
where $\zeta$ is the solution set found by the algorithm $S$ during the iterative process.

Let us define the bounds of the search on the Lebesgue metric space as the infinum
\begin{eqnarray}
\theta=\inf\{k:v[x \in H | f(x)<k]>0\},
\end{eqnarray}
where $v[X]$ is the measure on set $X$, which means that there are non-empty subsets in the search space and the fitness value corresponding to the element in the non-empty subset can be infinitely close to $\theta$. Thus, the neighbourhood or region of optimal solutions can be defined as
\begin{eqnarray}
R_{\varepsilon ,M}=\begin{cases}
\{x\in H|f(x)<\theta+\varepsilon\},& -\infty<\theta<\infty, \\[5pt]
\{x\in H|f(x)<M\},& \theta=-\infty,
\end{cases} \end{eqnarray}
where $\varepsilon >0$ and $M<0$. If a stochastic algorithm finds a point in $R_{\varepsilon, M}$, then we can consider that the algorithm finds the global optimal solution or an approximation to the global optimal solution.

In general, for two settings, two conditions are necessary to guarantee the global optimality is achievable:

Condition 1: An optimization algorithm $S$ should guarantees that the sequence  $\{f(x^t)\}_{t=0}^\infty$ is decreasing. If there is $f(S(x,\zeta))\leq f(x)$ , and at the same time there is established $\zeta\in H$ , we have
\begin{eqnarray}
f(S(x,\zeta))\leq f(\zeta).
\end{eqnarray}

Condition 2: For all subsets $\forall B \in H$ subject to $v(B)>0$,
we have
\begin{eqnarray}
\prod_{t=0}^\infty (1-u(B))=0,
\end{eqnarray}
where $u_t(B)$ represents the probability measure of the $t$-th iterative result of the random algorithm $S$ on $B$.

Mathematically speaking, a stochastic optimization algorithm that can have a guaranteed global convergence is based on the following lemma or criterion \cite{Solis,Jiang,Villa}

\begin{lem}
 For $f$ is measurable and the feasible solution space $H$ is a measurable subset on $R^n$, if the stochastic algorithm $S$ satisfies both Condition 1 and Condition 2, sequence $\{x_t\}_{t=0}^\infty$ is generated by the algorithm $S$ will lead to
\begin{eqnarray}
\lim_{t\rightarrow \infty} P(x_t \in R_{\varepsilon,M}) = 1,
\end{eqnarray}
where $P(x_t \in R_{\varepsilon,M})$ represents the probability that the best solution obtained by algorithm $S$ after $t$ iterations belongs to $R_{\varepsilon,M}$.
\end{lem}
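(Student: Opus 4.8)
The plan is to exploit the fact that Condition 1 makes the target region $R_{\varepsilon,M}$ \emph{absorbing} for the best-so-far sequence, and then to use Condition 2 to show that the probability of never entering this region vanishes. First I would record that $R_{\varepsilon,M}$ has positive measure. In the case $-\infty<\theta<\infty$, the very definition of the infimum $\theta$ in (9) guarantees that $v[\{x\in H: f(x)<\theta+\varepsilon\}]>0$ for every $\varepsilon>0$, since any value $k<\theta+\varepsilon$ with positive measure gives $\{f<k\}\subseteq\{f<\theta+\varepsilon\}$; and in the case $\theta=-\infty$ the set $\{k: v[f(x)<k]>0\}$ is unbounded below, so $v[\{x\in H: f(x)<M\}]>0$ as well. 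Writing $R:=R_{\varepsilon,M}$, we therefore have $v(R)>0$, so that Condition 2 is applicable to $B=R$.

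The key structural observation is monotonicity. Since $R$ is a sublevel set of $f$ (namely $\{f<\theta+\varepsilon\}$ or $\{f<M\}$) and Condition 1 guarantees $f(x_{t+1})=f(S(x_t,\zeta))\le f(x_t)$, the event $\{x_t\in R\}$ is absorbing: once some iterate $x_k$ lands in $R$, every later iterate $x_t$ with $t\ge k$ satisfies $f(x_t)\le f(x_k)<\theta+\varepsilon$ (respectively $<M$), and hence $x_t\in R$. Consequently the complementary event $\{x_t\notin R\}$ occurs precisely when the algorithm has failed to sample a point of $R$ at each of the steps $0,1,\ldots,t$.

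Next I would convert this into a product bound. At iteration $k$ the probability of landing in $R$ is $u_k(R)$, so the conditional probability of missing $R$ at step $k$, given that it was missed at all earlier steps, is $1-u_k(R)$. Chaining these conditional misses yields
\[
P(x_t\notin R)=\prod_{k=0}^{t}\bigl(1-u_k(R)\bigr).
\]
Letting $t\to\infty$ and invoking Condition 2 with $B=R$ (legitimate because $v(R)>0$), the infinite product on the right equals $0$, whence $\lim_{t\to\infty}P(x_t\notin R)=0$, that is, $\lim_{t\to\infty}P(x_t\in R_{\varepsilon,M})=1$, as claimed.

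The step I expect to require the most care is the passage to the product bound: strictly speaking $u_k(R)$ may depend on the past history of the search, so the expression for $P(x_t\notin R)$ should be read as a telescoping of conditional probabilities and justified by induction on $t$, rather than as a product over independent events. Once the absorbing property and the positivity $v(R)>0$ are established, the remainder of the argument is a direct application of Condition 2.
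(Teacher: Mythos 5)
The paper offers no proof of this statement at all: it is quoted as a known result (the Solis--Wets global search theorem) with citations to Solis and Wets, Jiang et al., and Villalobos-Arias et al., so there is no in-paper argument to compare yours against. Your reconstruction is the standard Solis--Wets argument and its overall structure is sound: positivity of $v(R_{\varepsilon,M})$ from the definition of the infimum $\theta$, absorption of the best-so-far sequence in the sublevel set via Condition 1, and the product bound killed by Condition 2. Two small points deserve tightening. First, your identity $P(x_t\notin R)=\prod_{k=0}^{t}\bigl(1-u_k(R)\bigr)$ should be the inequality $P(x_t\notin R)\le\prod_{k}\bigl(1-u_k(R)\bigr)$: the event $\{x_t\notin R\}$ only \emph{implies} that no sample $\zeta_k$ fell in $R$ (it is not equivalent to it, since $x_t$ can enter $R$ through the map $S$ or the initialization even when no raw sample lands there), and $u_k$ is the sampling measure, not the conditional law of the iterate. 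The inequality points in the right direction, so the conclusion survives. Second, the step ``if $\zeta_k\in R$ then $x_{k+1}\in R$'' uses specifically the second half of Condition 1, namely $f(S(x,\zeta))\le f(\zeta)$, not merely the monotonicity $f(x_{t+1})\le f(x_t)$; you invoke monotonicity for absorption but should make explicit that it is the $f(S(x,\zeta))\le f(\zeta)$ clause that converts ``a sample hit $R$'' into ``the iterate entered $R$.'' With those two repairs your proof is a correct, self-contained derivation of a result the paper merely cites.
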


In other words, the above criterion means that the algorithm will converge with a probability one as the number of iterations is sufficiently large, which equivalently means that the algorithm can have almost guaranteed global convergence.

\section{Global Convergence Analysis}

In order to prove the convergence of the bat algorithm, we will introduce some preliminaries.
If the position of each bat individual in the BA algorithm is considered as a state $x$, then the process of states $x_t$ with pseudotime or iteration counter $t$ can be considered as a random process. For such a stochastic process, the Markov chain can be an effective tool to analyze its convergence in a probability sense.

\subsection{Preliminaries}
Let us first define the state, state space and other relevant concepts that will be later used for proving the global convergence of the BA.

The states of bats and the state space can be defined as follows:
\begin{Defi}
The position of a bat individual $x$ with velocity $v$ and historical best position $p$ forms its state or status, denoted by $a=(x,v,p)$, where $x, p \in H$.
\end{Defi}
In addition, we have $f(p)\leq f(x)$ and $v\in[v_{min},v_{max}]$. All possible states of all bats form a state space for bats, denoted by
\begin{eqnarray}
A=\Big\{{a=(x,v,p)|x,p \in H, \; f(p)\leq f(x), \; v \in [v_{min},v_{max}]}\Big\}.
\end{eqnarray}

Furthermore, the states and state space of the bats population or group can be defined as follows:
\begin{Defi}
The set of all $N$ bat individuals is called the bat group, and the states of this bat group can be denoted by $b=(a_1, a_2, \ldots, a_N)$. The collection of all possible bat group status or states forms the bat group status space, denoted by
\begin{eqnarray}
B=\Big\{b=(a_1, a_2, \dots, a_N), a_i \in A (1\leq i \leq N)\Big\}.
\end{eqnarray}
\end{Defi}

From the above definitions, it is obvious that the definition of bat group status $B$ already contains the best position (or the best solution vector) in the group history. Furthermore, the state transition for the positions of bats representing solutions can be defined as follows:

For $\forall a_1=(x_1,v_1,p_1)\in A$ and $\forall a_2=(x_2,v_2,p_2)\in A$ during the iterations of the BA algorithm, the state transition from $a_1$ to $a_2$ can be denoted by
\begin{eqnarray}
F_A(a_1)=a_2,
\end{eqnarray}
where $F_A$ is the transition function from $a_1$ to $a_2$ in the state space $A$.

Similarly,   for $\forall b_i=(a_{i,1}, a_{i,2}, \ldots, a_{i,N})\in H$ and $\forall b_j=(a_{j,1}, a_{j,2}, \ldots, a_{j,N}) \in H$, the iterative process of the BA algorithm in essence transfers the bat group states from $b_i$ to $b_j$. That is
\begin{eqnarray}
F_b(b_i)=b_j.
\end{eqnarray}

\subsection{Markov Chain Model for BA}

In order to prove the convergence using a Markov chain framework, we have to build a Markov chain model for the bat algorithm. Let us first start with a theorem:
\begin{thm}
In the BA algorithm, the bat status $a_1$ is essentially shifted in one step to the status $a_2$, and its transition probability is the joint probability
\begin{eqnarray}
P(F_A(a_1)=a_2)=P(x_1\rightarrow x_2) P(v_1\rightarrow v_2) P(p_1\rightarrow p_2),
\end{eqnarray}
where $P(x_1\rightarrow x_2)$ is the transition probability of the bat position
from $x_1$ to $x_2$, $P(v_1\rightarrow v_2)$ is the transition probability of the bat velocity from $v_1$ to $v_2$, and $P(p_1 \rightarrow p_2)$ is the transition probability of the best position (in the whole history) from $p_1$ to $p_2$.
\end{thm}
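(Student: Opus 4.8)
The plan is to realize the one-step transition $F_A(a_1)=a_2$ as the composition of the three update mechanisms that act on the three components of the state $a=(x,v,p)$, and then to show that the probability of the compound event factors into the three stated transition probabilities. Since $a_2$ is reached from $a_1$ within a single iteration, I would first make explicit that the event $\{F_A(a_1)=a_2\}$ is precisely the intersection of the three sub-events $\{x_1\to x_2\}$, $\{v_1\to v_2\}$ and $\{p_1\to p_2\}$, so that by definition $P(F_A(a_1)=a_2)=P(\{x_1\to x_2\}\cap\{v_1\to v_2\}\cap\{p_1\to p_2\})$. The whole theorem is then the claim that this joint probability splits as a product of marginals.

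First I would identify the source of randomness in each update by reading off equations (2) and (3) together with the loudness and best-position rules. The velocity update $v_1\to v_2$ is driven by the uniformly distributed frequency $w_i$ (through $\beta$); the position update $x_1\to x_2$ is then determined once the velocity has been sampled, via $x_2=x_1+v_2$; and the update of the historical best $p_1\to p_2$ is fixed by the fitness comparison $f(x_2)\le f(p_1)$. The key step is to apply the multiplication (chain) rule to expand the joint probability as a product of conditional transition probabilities, and then to argue that, because each component is advanced by its own stochastic mechanism within the iteration, each conditional factor collapses to the corresponding marginal $P(x_1\to x_2)$, $P(v_1\to v_2)$ or $P(p_1\to p_2)$.

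The hard part will be justifying this collapse rigorously, because the three updates are in fact coupled: the new velocity enters the new position, and the new position in turn governs whether the historical best changes, so the sub-events are not literally independent. I would therefore make the conditioning structure explicit — conditioning the position transition on the realized velocity and the best-position transition on the realized new position — and show that, once the single driving random variable $\beta$ is fixed, the remaining maps are deterministic, so the conditional probabilities take values in $\{0,1\}$ and the product identity holds on the support of the transition. Reducing the coupling to a deterministic dependence carried by one random draw per iteration is what legitimizes the factorization, and it is the step on which I would spend the most care.
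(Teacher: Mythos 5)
Your proposal is correct in spirit but takes a genuinely more careful route than the paper. The paper's own proof is very short on the factorization step: it simply observes that $x_1\rightarrow x_2$, $v_1\rightarrow v_2$ and $p_1\rightarrow p_2$ ``are carried out simultaneously'' and asserts that the joint probability is therefore the product; the bulk of its proof is then spent writing down the three factors explicitly --- a uniform density $1/|w_i(p_g-x_1)|$ over the reachable interval for the position and velocity transitions, and the indicator $P(p_1\rightarrow p_2)=1$ iff $f(p_2)<f(p_1)$ for the best-position update. You, by contrast, put your finger on exactly the point the paper glosses over: the three component transitions are driven by a single random draw $\beta$ per iteration, with $x_2$ determined by $v_2$ and $p_2$ determined by $x_2$, so they are coupled rather than independent, and a literal product of marginals is not automatic. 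Your plan --- expand by the chain rule, condition the position transition on the realized velocity and the best-position transition on the realized position, and observe that once $\beta$ is fixed the remaining maps are deterministic so the conditional factors are $\{0,1\}$-valued --- is the right way to make the stated identity meaningful: the ``product'' is really a product of successive conditional transition probabilities, which is implicitly how the paper's explicit formulas must be read (note that its formula for $P(p_1\rightarrow p_2)$ is already a deterministic indicator of the realized fitness comparison). What the paper's approach buys is brevity and the explicit closed forms needed later for the Markov-chain argument; what your approach buys is a correct accounting of the dependence structure, at the cost of having to carry the conditioning through. If you complete your argument, state clearly that the factors in the theorem are to be interpreted as conditional (not marginal) probabilities, since otherwise the factorization is false for perfectly correlated updates.
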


\begin{proof}
The status of a bat is transferred via $a_1(x_1, v_1, p_1) \rightarrow a_2(x_2, v_2, p_2)$. That is, $x_1\rightarrow x_2$, $v_1\rightarrow v_2$, and $p_1\rightarrow p_2$ are carried out simultaneously. The joint probability of $F_A(a_1)\rightarrow a_2$ is
\begin{eqnarray}
P(F_A(a_1)=a_2)=P(x_1\rightarrow x_2) P(v_1\rightarrow v_2) P(p_1\rightarrow p_2).
\end{eqnarray}

From the updating equations for velocities and positions (see Eqs.(2) and (3)), it is easy to see that the transition probability of the positions of bats can be calculated by
\begin{eqnarray}
P(x_1\rightarrow x_2)=\begin{cases}
\frac{1}{|f(p_g-x_1)|},
& v_2\in [x_1+\omega v_1,\omega v_1+w_i(p_g-x_1)], \\[5pt]
0, & v_2\notin [x_1+\omega v_1,\omega v_1+w_i(p_g-x_1)].
\end{cases}
\end{eqnarray}
Similarly, the transition probability concerning the velocities of bats can be calculated by
\begin{eqnarray}
P(v_1\rightarrow v_2)=\begin{cases}
\frac{1}{|f(p_g-x_1)|}, & v_2\in [\omega v_1,\omega v_1+w_i(p_g-x_1)], \\[5pt]
0, & v_2 \notin [\omega v_1,\omega v_1+ w_i(p_g-x_1)].
\end{cases}
\end{eqnarray}
In addition,  the transition probability of the best position $p$ of all bats is
\begin{eqnarray}
P(p_1\rightarrow p_2)=\begin{cases}
1, & f(p_2)<f(p_1), \\[5pt]
0, & f(p_2)\geq f(p_1).
\end{cases}
\end{eqnarray}
It is worth pointing out that we treat the optimization problem as a minimization problem. Thus, $p_2$ is better than $p_1$ if $f(p_2)<f(p_1)$.
\end{proof}

With these results, we can now prove the following theorem:
\begin{thm}
In the iterative process of the BA algorithm, the transition probability of the bat group status $b_i$ to $b_j$ is given by
\begin{eqnarray}
P(F_b(b_i)=b_j)=\prod_{t=1}^N P(F_A(a_{it})=a_{jt}),
\end{eqnarray}
where $N$ is the total number of iterations so far.
\end{thm}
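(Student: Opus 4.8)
The plan is to prove that the transition probability of the whole bat group factorizes as the product of the individual bat transition probabilities, which amounts to arguing that the $N$ individual state transitions are carried out (conditionally) independently within a single iteration. My approach would be to reduce the group-level statement to the single-bat statement already established in Theorem~1, and then invoke the definition of the group state $b=(a_1,a_2,\dots,a_N)$ together with the mutual independence of the per-bat updates.

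First I would recall that, by Definition~2, a group state is simply an ordered tuple $b_i=(a_{i,1},a_{i,2},\dots,a_{i,N})$ of individual bat states, and that the group transition $F_b(b_i)=b_j$ is defined componentwise: it is realized precisely when $F_A(a_{i,t})=a_{j,t}$ holds simultaneously for every index $t$ with $1\le t\le N$. Thus the event $\{F_b(b_i)=b_j\}$ is exactly the intersection $\bigcap_{t=1}^N \{F_A(a_{i,t})=a_{j,t}\}$, and the group transition probability is the probability of this joint event. The core of the proof is therefore to justify writing
\begin{eqnarray}
P\Big(\bigcap_{t=1}^N \{F_A(a_{i,t})=a_{j,t}\}\Big)=\prod_{t=1}^N P\big(F_A(a_{i,t})=a_{j,t}\big).
\end{eqnarray}

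Next I would supply the independence argument that legitimizes this factorization. In the BA update rules, Eqs.~(1)--(3), each bat draws its own frequency $w_i$ through an independent uniform random variable $\beta\in[0,1]$, and the velocity and position of bat $t$ are updated using only that bat's own current state $(x_t,v_t,p_t)$ and the shared best position $p$. Conditioned on the current group state $b_i$ (which fixes $p$), the randomness driving the $N$ transitions comes from $N$ mutually independent random draws, so the per-bat transition events are conditionally independent. This conditional independence is precisely what turns the probability of the intersection into the product, and each factor $P(F_A(a_{i,t})=a_{j,t})$ is then evaluated by Theorem~1 as the product $P(x\to x')\,P(v\to v')\,P(p\to p')$.

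The main obstacle I expect is the independence claim itself, because the bats are \emph{not} truly independent: they all share the common best position $p$, which couples their updates. The delicate point is that this coupling is resolved by conditioning on the current group state $b_i$, under which $p$ is a fixed (non-random) quantity rather than a source of shared randomness; only after this conditioning are the remaining stochastic inputs independent across bats. I would therefore state carefully that the factorization is understood as a one-step transition probability conditioned on $b_i$, and flag that a fully rigorous treatment would need to verify that no other cross-bat dependence (for instance through the loudness-governed acceptance in the local search move) survives the conditioning. A minor secondary issue is purely notational: the statement uses $N$ ambiguously for the population size in Definition~2 and for the ``number of iterations'' in the wording of the theorem, and I would clarify that the product runs over the $N$ bats in the population.
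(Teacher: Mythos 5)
Your proposal takes essentially the same route as the paper: decompose the group transition $F_b(b_i)=b_j$ into the simultaneous per-bat transitions $F_A(a_{i,t})=a_{j,t}$ and write the joint probability as the product of the individual transition probabilities. In fact you are more careful than the paper, which simply asserts the factorization; your conditional-independence argument (conditioning on $b_i$ so that the shared best position $p$ is fixed and only the independent per-bat draws of $\beta$ remain) and your remark that $N$ here is really the population size rather than the iteration count both address genuine gaps in the published proof.
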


\begin{proof}
 As $F_b(b_i)=b_j$ indicates that each state in the bat group state, $b_i$ is simultaneously transferred to group state $b_j$; that is \[ F_A(a_{i1})=a_{j1}, F_A(a_{i2})=a_{j2}, \ldots, F_A(a_{iN})=a_{jN}. \]
Then, the transition probability of a group transition of the bat group should
be the joint probability of each iteration step. Thus, we have
\begin{eqnarray} \begin{split}
P(F_b(b_i)=b_j)&=P(F_A(a_{it})=a_{jt})P(F_A(a_{it})=a_{jt}) \ldots P(F_A(a_{iN})=a_{jN})\\
&=\prod _{t=1}^NP(F_A(a_{it})=a_{jt}),
\end {split} \end{eqnarray}
which concludes the proof.
\end{proof}

Now we have to show that the state sequence $a$ is finite, homogeneous Markov chain.
\begin{thm}
In the BA algorithm, the bat group state sequence $a$ is indeed a finite homogeneous Markov chain.
\end{thm}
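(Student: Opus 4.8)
The plan is to verify the three defining properties in turn: that the underlying state space is \emph{finite}, that the generated sequence enjoys the \emph{Markov} property, and that the resulting transition mechanism is \emph{homogeneous} in the iteration index. I would settle finiteness first. By Definition 1 every admissible single-bat state $a=(x,v,p)$ satisfies $x,p\in H$ and $v\in[v_{min},v_{max}]$, so each component is confined to a bounded region (the feasible box $H$ and the velocity interval). Since the algorithm is realized in finite-precision arithmetic, each bounded region contains only finitely many machine-representable points, so the single-bat state set $A$ of (14) is finite. By Definition 2 the group space is the $N$-fold product $B=A^N$ of (15), and a finite product of finite sets is finite.

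Next I would establish the Markov property. The one-step updates (6)--(7) express the new velocity and position $(x_{t+1},v_{t+1})$ as explicit functions of the current triple $(x_t,v_t,p_t)$ together with the freshly sampled frequency $w_i$ (equivalently $\beta$ of (1)), while the updated best $p_{t+1}$ is fixed by the comparison rule (22). Because the random draw at step $t$ is independent of all earlier draws, the conditional law of $a_{t+1}$ given the full history $a_0,\dots,a_t$ reduces to its conditional law given $a_t$ alone. By the factorization (23) of Theorem 2 the group kernel is the product of the single-bat kernels, so the same argument lifts from $A$ to $B$ and yields $P(b_{t+1}\mid b_t,\dots,b_0)=P(b_{t+1}\mid b_t)$.

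Finally, for homogeneity I would appeal directly to the kernels computed in Theorem 1: the case expressions (20)--(22) depend only on the current and target states and on the fixed quantities $\omega$ and the frequency range $[f_{min},f_{max}]$, with no explicit occurrence of the counter $t$. Hence $P(F_A(a_1)=a_2)$ is the same at every iteration, and by (23) so is the group transition probability, which is exactly time-homogeneity. The step I expect to be the genuine obstacle is this homogeneity claim rather than finiteness or the Markov property: the loudness and pulse-rate recursions (5), $A_i^{t+1}=\alpha A_i^t$ and $r_i^{t+1}=r_i^0[1-\exp(-\gamma t)]$, are overtly $t$-dependent, so I would have to argue carefully that they are deliberately excluded from the state triple $a=(x,v,p)$ and hence never enter the transition kernel above; otherwise strict stationarity in $t$ would fail and one would obtain merely an inhomogeneous chain. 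I would therefore state explicitly that the frequency-driven dynamics of (6)--(7) are taken as the state-governing map, with the local-search parameters held outside the Markov state, so that the kernel of Theorem 1 is stationary and the chain finite and homogeneous as claimed.
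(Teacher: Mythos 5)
Your proposal follows essentially the same route as the paper's proof: first finiteness of the state space, then the Markov property obtained by bundling $(x,v,p)$ into a single state so that the second-order position recurrence becomes first-order, and finally homogeneity from the time-independence of the transition kernels of Theorem 1, lifted to the group level via Theorem 2. The only divergences are minor: you justify finiteness via finitely many machine-representable points in the bounded regions, whereas the paper invokes the finiteness of the population size and of the number of iterations; and you make explicit within the proof (rather than deferring to a closing remark, as the paper does) that the $t$-dependent loudness and pulse-emission-rate updates must be kept outside the state triple for homogeneity to hold.
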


\begin{proof}
For any optimization algorithm, its search space during the whole iterative process is finite because both the population size and the number of iterations are finite, so each of the bat state $a=(x, v, p)$ among the $x, v, p$ are finite, which leads to the fact that the bat state space is finite.

From the algorithmic equations outlined in Section 2, the position updates of each bat individual is a second-order equation, so the random process of positions of the BA algorithm changes with time, which is not the Markov process. However,
if we can group the position, velocity and global optimal values together as one state $B$, then state $B(t+1)$ is only related to state $B(t)$, not its history. Then, sequence $B$ has proper Markov chain properties.

From Eqs.(1)-(3), $\beta \in [0,1]$ is a random vector that is uniformly distributed, and the algorithmic equations [i.e., (1) to (3)] form a stochastic
system. It is straightforward to show that the state $B(t-1)$  of the system at time $t$ transferring to the new state $B(t)$ is completely determined by its state at time $t$. In addition, the factor $\gamma$ and $\omega$ as well as the pseudotime $t$ in the iterative formulas are independent of the state of the system before time $t$.

From $B(t-1)$ to $B(t)$ of bats group state sequence $\{B(t); t\geq o\}$, the transition probability $P(F_B(B(t-1))=B(t))$ of the two states is determined by the transition probability of all individuals in the bats group, and the probability of transition can be calculated by the joint probability
 of $P(x(t-1)\rightarrow x(t))$, $P(v(t-1)\rightarrow v(t))$, and $P(p(t-1)\rightarrow p(t))$, according to Theorem 1.

In addition, $P(x(t-1)\rightarrow x(t))$ and $P(v(t-1)\rightarrow v(t))$ are only related to $x, v, p$ at time $t-1$. Thus, $P(F_B(B(t-1))=B(t))$ is only related to the state $a_i(t-1), 1\leq i \leq N$ of all bats at time $t-1$. Therefore, the  Markov chains are finite.

Furthermore, from Theorem 1, $P(F_A(a(t-1))=a(t))$ is independent of time $t-1$.
Similar argument also indicates that $P(F_B(B(t-1))=B(t))$ is also independent of $t-1$. Therefore, the finite Markov chains are homogeneous.
\end{proof}

\subsection{Global Convergence of the BA }
With the above definitions and theorems, let us proceed to prove the convergence of the bat algorithm.

For the true optimal solution $g$ for an optimization problem $\langle H, f\rangle$ with an objective function $f(x)$ where $x$ is a vector, the optimal state set can be defined as
\begin{equation}
L=\{a=(x,v,p) |f(p)=f(g), a \in A\}.
\end{equation}
Obviously we have $L \subseteq A$ as $L$ should be a subset of $A$. If in any case
$L=A$, any solution in $A$ is equally optimal, which means that objective landscape is flat (thus it is equivalent to a feasibility problem in which the objective does not exert any selection pressure on different solutions). This is just a special case and the optimal solution is already achieved, and thus we will not discuss this case any further.

In addition, for the optimal solution $g$ to an optimal problem
$\langle H, f\rangle$, the optimal bat group state set can be defined as
\begin{equation}
 U=\{B=(a_1, a_2, \dots, a_N)\big| \exists a_i \in L, 1\leq i\leq N\},
\end{equation}
which means that the optimal bat groups state set $U$ is the set of all bat groups such that at least one bat individual in the population
with its state belong to $L$.

Using the same methodology as outlined in \cite{Hexs} (Theorems 7 and 8 in their paper) and the results in \cite{Wen-xiu}, we can prove the following three theorems:
\begin{thm}
When $U\subset B$, there is no closed set $I$ other than $B$ such that $I \bigcap U=\emptyset$.
\end{thm}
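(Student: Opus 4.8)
The plan is to argue by contradiction, exploiting the monotone ``record-keeping'' of the best position $p$ together with the positive transition density established in Theorem 1. First I would suppose that, contrary to the claim, there exists a nonempty closed set $I\subseteq B$ with $I\cap U=\emptyset$. Recall that a set is closed for the chain precisely when no transition leaves it, i.e. $P(F_b(b_i)=b_j)=0$ whenever $b_i\in I$ and $b_j\notin I$; equivalently $\sum_{b_j\in I}P(F_b(b_i)=b_j)=1$ for every $b_i\in I$. I would note at the outset that $I\neq B$ is automatic here, since $U\subset B$ forces $B\cap U=U\neq\emptyset$; hence the real content of the statement is that no proper nonempty closed set can avoid $U$.

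The heart of the argument is a reachability step: from any group state $b_i\in I$ one reaches a state in $U$ with positive probability. Since $b_i\notin U$, no individual in $b_i$ is optimal, so the current best satisfies $f(p)>f(g)$. Using the explicit position transition law from Theorem 1, the updated position of a bat ranges, as the random frequency $w_i$ sweeps its interval (driven by the uniform $\beta\in[0,1]$), over a whole interval on which the transition density $1/|f(p_g-x_1)|$ is strictly positive. Because the global optimizer $g$ (or, in the discretized finite setting guaranteed by Theorem 3, an optimal state of $U$) lies in the reachable range, there is a strictly positive probability that the new position yields an individual with $f=f(g)$; the best-position update then accepts it with probability $1$, since $P(p_1\to p_2)=1$ whenever $f(p_2)<f(p_1)$. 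Thus $P(F_b(b_i)=b^{\ast})>0$ for some $b^{\ast}\in U$.

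I would then close the loop: because $I$ is closed and $b_i\in I$, every state reachable from $b_i$ with positive probability must itself lie in $I$, and in particular $b^{\ast}\in I$. But $b^{\ast}\in U$, so $b^{\ast}\in I\cap U$, contradicting $I\cap U=\emptyset$. Hence no such closed set $I$ exists, which is exactly the assertion. The same conclusion follows if reaching $U$ requires several steps rather than one: iterating the strict improvement of $p$ through intermediate states of $I$ still terminates, with positive probability, in an optimal state, again forcing $I\cap U\neq\emptyset$.

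I expect the delicate point to be this reachability step, specifically the passage from ``the update interval overlaps the optimal region'' to ``an optimal state of $U$ is hit with positive probability.'' In a genuinely continuous search space, attaining $f(p)=f(g)$ exactly is a measure-zero event, so the step really leans on the finiteness established in Theorem 3: once $B$ is a finite state space, the optimal states are atoms of positive mass and the positive-density interval argument delivers a genuine positive transition probability into $U$. I would therefore be explicit that the proof is conducted in that finite setting, and that it is the monotone acceptance rule for $p$ that prevents the chain from ever being trapped in a closed set disjoint from the optimum.
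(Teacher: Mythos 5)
First, a point of comparison: the paper does not actually prove this theorem. Theorems 4--6 are stated without proof; the authors only remark that they can be proved ``using the same methodology'' as \cite{Hexs} and \cite{Wen-xiu}. So your proposal cannot match or diverge from an in-paper argument --- there is none. That said, your skeleton (suppose a nonempty closed $I$ disjoint from $U$; show that from every state of $I$ the chain reaches $U$ with positive probability; closedness then forces a member of $U$ into $I$, a contradiction) is the standard route for statements of this type and is surely what the cited references intend. Your remark that $B$ itself cannot satisfy $I\cap U=\emptyset$ when $U\subset B$, so the real content concerns proper closed sets, is also correct.

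The genuine gap is the reachability step, and you have correctly sensed where the trouble lies without resolving it. Under the transition kernel of Theorem 1, the one-step reachable set of positions from a state $(x,v,p)$ is the segment $\{\,x+\omega v+w(p-x)\;:\;w\in[w_{\min},w_{\max}]\,\}$, a one-dimensional line segment inside the $d$-dimensional search space. There is no reason the optimizer $g$, or any state of $U$, lies on that segment, so ``$g$ lies in the reachable range'' is an assertion, not a consequence of the positive density $1/|f(p_g-x_1)|$ on that segment. Iterating does not repair this: each subsequent move is again along a line through the current position toward $p$, so the trajectory remains confined to a low-dimensional affine set determined by the initial positions, velocities and $p$, and a closed set avoiding $U$ can perfectly well exist for that reduced dynamics. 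What actually supplies full-support reachability in the bat algorithm is the local random-walk step $X_{new}=X_{old}+\varepsilon A^{t}$, which perturbs positions in all $d$ coordinates with positive density; neither your argument nor the paper's Markov model (Theorem 1) incorporates it. To close the gap you must either add that step to the transition kernel and show it assigns positive probability to every (discretized) cell of $H$ --- whence every optimal atom of the finite chain of Theorem 3 is reachable in one step and your contradiction goes through --- or else impose an explicit accessibility hypothesis. Your observation that exact attainment of $f(p)=f(g)$ is a measure-zero event in the continuum, so that the finite-state setting of Theorem 3 must be invoked to make optimal states atoms of positive mass, is correct and should be kept; it is the other half of the reachability step that still needs an argument.
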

\begin{thm}
 If a Markov chain has a non-empty set $Z$ with no closed set $D$ satisfying $Z\bigcap D=\emptyset$, then $\lim_{t\rightarrow \infty}P(x_t=j)=\pi_j$, only if $j \in Z$, and $\lim_{t\rightarrow \infty} P(x_t=j)=0$ only if $j \notin Z$.
\end{thm}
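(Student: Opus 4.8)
The plan is to reduce the claim to the classical limit theorem for finite homogeneous Markov chains, using the hypothesis to identify $Z$ with the recurrent (essential) part of the chain and then treating transient and recurrent states separately. Since the chain is finite and homogeneous (Theorem 3), the state space $S$ decomposes as $S = T \cup C_1 \cup \cdots \cup C_k$, where each $C_m$ is a closed irreducible class of recurrent states and $T$ is the set of transient states; the structural fact I will use repeatedly is that each $C_m$ is closed. Throughout I write $P^{(t)}_{ij}=P(x_t=j\mid x_0=i)$ for the $t$-step transition probabilities.

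First I would turn the hypothesis into a reachability statement. Let $G$ be the set of states from which $Z$ is never reached, i.e. $G=\{i: P(x_t\in Z \text{ for some } t \mid x_0=i)=0\}$. A one-line check shows $G$ is closed: if $i\in G$ and $P_{i\ell}>0$ then $\ell\in G$, since otherwise $i$ could reach $Z$ through $\ell$. As $G\cap Z=\emptyset$ by construction, the hypothesis that no closed set is disjoint from $Z$ forces $G=\emptyset$. Hence every state reaches $Z$ with positive probability; by finiteness this probability is bounded below uniformly over $|S|$ steps, so the chain enters $Z$ in finite time almost surely. In particular each closed class $C_m$ must meet $Z$, and $S\setminus Z$ contains no closed set.

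Next I would settle the two cases. For a transient state $j$ the standard estimate $\sum_{t}P^{(t)}_{ij}<\infty$ gives $P^{(t)}_{ij}\to 0$ for every $i$, whence $\lim_{t\to\infty}P(x_t=j)=0$ after summing against the initial distribution; this yields the ``$j\notin Z$'' half of the conclusion once $S\setminus Z$ is known to be transient. For $j$ recurrent I would invoke the fundamental limit theorem: the chain restricted to its irreducible aperiodic class has a unique stationary distribution $\pi$ with $P^{(t)}_{ij}\to\pi_j>0$, and combined with the almost-sure absorption into $Z$ from the previous step this gives $\lim_{t\to\infty}P(x_t=j)=\pi_j$, the ``$j\in Z$'' half.

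The hard part, and the step the hypothesis alone does not quite deliver, is the exact alignment of $Z$ with the recurrent core: the ``no disjoint closed set'' condition guarantees that every recurrent class meets $Z$, but not a priori that $Z$ contains \emph{all} recurrent states (a class could straddle $Z$ and its complement), nor that a single aperiodic class gives a start-independent limit. I would close this gap by showing $Z$ is itself closed and absorbing, hence precisely the minimal closed (ergodic) set of the chain. In the BA application this is exactly what holds: the historical-best update obeys $P(p_1\to p_2)=1$ only when $f(p_2)<f(p_1)$, so the best solution never worsens and the optimal set $U$ can never be left once entered, making $U=Z$ absorbing and identifying it with the recurrent core; the same non-improving transitions leave positive probability of remaining in place, which supplies aperiodicity. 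With $Z$ pinned down as the unique aperiodic recurrent set, the two cases above combine to give the stated dichotomy.
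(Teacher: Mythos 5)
The paper does not actually prove this theorem: Theorems 4--6 are introduced only with a pointer to the references [Hexs] and [Wen-xiu], so there is no in-paper argument to compare yours against. Your proposal therefore does strictly more work than the paper, and the route you take --- canonical decomposition of a finite homogeneous chain into transient states plus closed irreducible recurrent classes, the observation that the set $G$ of states from which $Z$ is unreachable is closed and disjoint from $Z$ and hence empty by hypothesis, and the split into the transient case (via $\sum_t P^{(t)}_{ij}<\infty$) and the recurrent case (via the ergodic limit theorem) --- is the standard proof of the result this theorem is meant to encode.

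More importantly, the gap you flag is real, and it is a defect of the theorem as stated rather than of your argument. Take $S=\{1,2\}$ with $p_{ij}=1/2$ for all $i,j$ and $Z=\{1\}$: the only non-empty closed set is $S$ itself, which meets $Z$, so the hypothesis holds, yet $P(x_t=2)=1/2$ for all $t\ge 1$, contradicting the intended conclusion $\lim_{t}P(x_t=j)=0$ for $j\notin Z$. The hypothesis ``no closed set disjoint from $Z$'' only yields reachability of $Z$ from every state; one must additionally assume that $Z$ is closed (absorbing) to force every recurrent state into $Z$, and aperiodicity (or pass to Ces\`aro limits) for the limits to exist at all. Your repair --- importing the BA-specific fact that $P(p_1\to p_2)=0$ unless $f(p_2)<f(p_1)$, so that the optimal set is absorbing --- is exactly the right fix for the application, though it means you are proving a corrected theorem rather than the one stated. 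Two smaller caveats: the limit $\pi_j$ is independent of the initial distribution only when $Z$ contains a single recurrent class (otherwise the limit depends on absorption probabilities), and your aperiodicity claim (positive holding probability) should be verified in one line from the update equations rather than asserted.
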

\begin{thm}
If the number of iteration approaches infinity or sufficiently large, the group state sequence will converge to the optimal state set $U$.
\end{thm}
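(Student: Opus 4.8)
The plan is to combine the finite homogeneous Markov chain structure from Theorem 3 with the two structural results in Theorems 4 and 5, taking the distinguished set to be the optimal group state set $Z=U$. Since Theorem 3 guarantees that $\{B(t)\}_{t\geq 0}$ is a finite homogeneous Markov chain, the limiting behaviour of its state occupation probabilities is governed entirely by the closed-set structure of the chain, and it is precisely this structure that Theorem 4 controls for $U$. The target identity is $\lim_{t\to\infty}P(B(t)\in U)=1$, which is the natural Markovian restatement of the Solis--Wets conclusion of Criterion 1.

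First I would verify that $U$ meets the hypothesis of Theorem 5. By definition $U$ is non-empty, since the optimal state $g$ yields at least one admissible group state lying in $L$. Theorem 4 asserts that the only closed set disjoint from $U$ could be $B$ itself; but $U\subseteq B$ gives $B\cap U=U\neq\emptyset$, so in fact \emph{no} closed set $D$ with $D\cap U=\emptyset$ exists. Thus $U$ plays the role of $Z$, and Theorem 5 yields $\lim_{t\to\infty}P(B(t)=j)=\pi_j$ for every $j\in U$ and $\lim_{t\to\infty}P(B(t)=j)=0$ for every $j\notin U$. Summing over the finite state space and using $\sum_j P(B(t)=j)=1$, the contributions from states outside $U$ vanish in the limit, forcing $\sum_{j\in U}\pi_j=1$. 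Hence
\[
\lim_{t\to\infty} P(B(t)\in U)=\sum_{j\in U}\pi_j=1,
\]
which is exactly the claimed convergence of the group state sequence to $U$. Equivalently, this verifies the Solis--Wets hypotheses of Criterion 1: Condition 1 follows from the elitist update rule for $p$ in Theorem 1, under which $P(p_1\to p_2)=1$ only when $f(p_2)<f(p_1)$, so the best fitness is monotone non-increasing; Condition 2 follows because the uniform randomisation in $\beta$ keeps the one-step sampling probability of every positive-measure region bounded away from zero, forcing $\prod_{t}(1-u_t(B))=0$.

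The main obstacle is not the final summation but the justification, underlying Theorem 4, that $U$ genuinely admits no disjoint closed set. This rests on two facts that must fit together. The first is that $U$ is absorbing: once some bat enters the optimal level $L$, the elitist $p$-dynamics forbid the group from ever leaving $U$, so $U$ is closed. The second is that $U$ is reachable from every group state with positive probability, which is where the random frequency $w_i$ (through $\beta$) is essential, since it assigns strictly positive transition probability into neighbourhoods of the optimum. If either the elitism or the nonvanishing exploration probability failed, a proper closed set disjoint from $U$ could form and trap the chain, and convergence would break down. I would therefore concentrate the effort on making the interplay between the monotone $p$-update and the positive exploration probability explicit, after which Theorems 4 and 5 deliver the conclusion mechanically.
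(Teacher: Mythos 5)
The paper gives no explicit proof of this theorem—it defers to the methodology of the cited references—and your argument (take $Z=U$, note via Theorem~4 that no closed set disjoint from $U$ exists since $B\cap U=U\neq\emptyset$, apply Theorem~5 to get $P(B(t)=j)\to 0$ for $j\notin U$, and sum over the finite state space) is exactly that standard argument, correctly executed. The aside on the Solis--Wets conditions belongs to the subsequent global-convergence theorem rather than to this one, but it does not affect the validity of your main chain of reasoning.
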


From the above four theorems, it is straightforward to prove the following global convergence theorem:
\begin{thm}
The bat algorithm with the Markov chain model defined in Section 4.2
has guaranteed global convergence.
\end{thm}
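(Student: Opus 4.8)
The plan is to deduce global convergence by verifying that the bat algorithm, viewed as the stochastic search $S$, satisfies both Condition 1 and Condition 2, so that Criterion 1 applies and yields $\lim_{t\to\infty} P(x_t \in R_{\varepsilon,M}) = 1$. The Markov structure built in Theorems 1--3 and the limiting behaviour captured in Theorems 4--6 supply exactly the ingredients needed for these two conditions, so the argument is mostly a matter of assembling them in the right order rather than proving anything new from scratch.

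First I would dispose of Condition 1. The elitist update of the historical best position, whose transition law is given in Theorem 1 by $P(p_1\to p_2)=1$ when $f(p_2)<f(p_1)$ and $P(p_1\to p_2)=0$ otherwise, guarantees that $p$ is replaced only by a strictly better point. Hence the sequence $\{f(p_t)\}$ is non-increasing along every sample path, which is precisely the monotonicity required: writing $S$ for one BA iteration, we have $f(S(x,\zeta))\le f(x)$, and since the retained best is at least as good as any candidate in the working set $\zeta$, also $f(S(x,\zeta))\le f(\zeta)$. Thus Condition 1 holds with essentially no further work.

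The main obstacle is Condition 2, namely $\prod_{t=0}^\infty\bigl(1-u_t(B)\bigr)=0$ for every measurable $B\subseteq H$ with $v(B)>0$. Here I would lean on the Markov analysis. By Theorem 3 the group state sequence is a finite homogeneous Markov chain, and by Theorem 4 the optimal group-state set $U$ admits no closed set disjoint from it; Theorems 5 and 6 then give $\lim_{t\to\infty}P(B_t\in U)=1$, so the chain enters $U$ infinitely often with probability one. The delicate point is to translate this reachability of $U$ into a uniform positive lower bound $\delta>0$ on the single-step probability $u_t(B)$ of hitting an arbitrary positive-measure region: the frequency term $w_i$ driven by the uniform variable $\beta\in[0,1]$, together with the local random walk $\varepsilon A^t$ with $\varepsilon\in[-1,1]$, makes every such $B$ reachable in one step, while finiteness plus homogeneity keep this probability bounded away from zero independently of $t$. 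Once $u_t(B)\ge\delta$ for all $t$, the series $\sum_t u_t(B)$ diverges and therefore $\prod_{t=0}^\infty(1-u_t(B))=0$, establishing Condition 2.

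With both conditions verified, I would close the proof by invoking Criterion 1: the sequence $\{x_t\}$ generated by BA satisfies $\lim_{t\to\infty}P(x_t\in R_{\varepsilon,M})=1$, which is the asserted guaranteed global convergence. As a consistency check this matches Theorem 6, since $B_t\in U$ means at least one bat occupies an optimal state in $L$, so convergence of the group state to $U$ and convergence of the best solution to $R_{\varepsilon,M}$ express the same conclusion. I expect the uniform lower bound in Condition 2 to be the only step requiring genuine care, because it is where the continuous randomization of the algorithm must be reconciled with the finite-chain abstraction used in Theorems 3--6.
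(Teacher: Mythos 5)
Your handling of Condition 1 coincides with the paper's: both arguments rest on the elitist update of the historical best position $p$, encoded in Theorem 1 by $P(p_1\to p_2)=1$ only when $f(p_2)<f(p_1)$, which forces $f(p_t)\le f(x_t)$ and makes the best-so-far fitness non-increasing. The divergence is in Condition 2, and it is where your proposal has a genuine gap. The paper never attempts a lower bound on $u_t(B)$; it routes Condition 2 entirely through the finite homogeneous Markov chain machinery, citing Theorem 6 (itself resting on Theorems 3--5 and the closed-set argument for $U$) to assert that the probability of missing the optimal state set tends to zero. You instead try to verify Condition 2 literally, by producing a uniform $\delta>0$ with $u_t(B)\ge\delta$ for every positive-measure $B\subseteq H$ and every $t$, and then summing.

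That uniform bound does not exist for the standard BA, for two concrete reasons. First, the frequency-tuning move $v_{t+1}=\omega v_t+(p-x_t)w_i$ is driven by the single scalar $\beta\in[0,1]$, so the resulting $x_{t+1}$ is constrained to a one-dimensional segment determined by $x_t$, $v_t$ and $p$; this segment has Lebesgue measure zero in $H\subseteq\mathbb{R}^d$ for $d\ge 2$, so a generic positive-measure $B$ is hit with probability exactly $0$ in one step by that move. Second, the local random walk $X_{new}=X_{old}+\varepsilon A^t$ has support of radius on the order of $A^t$, and since $A_i^{t+1}=\alpha A_i^t$ with $0<\alpha<1$ the loudness decays geometrically, so the one-step reachable neighbourhood shrinks to a point and no $t$-independent lower bound survives. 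Consequently $\sum_t u_t(B)$ need not diverge by your argument, and the product $\prod_t(1-u_t(B))$ cannot be shown to vanish this way. To repair the proof you would have to do what the paper does, namely discharge Condition 2 via the finite-chain convergence of the group state sequence to $U$ (Theorem 6), accepting the discretized state-space abstraction on which Theorems 3--6 are built, rather than via a direct one-step sampling bound on arbitrary measurable sets.
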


\begin{proof}
From the convergence criterion (Criterion 1), we know that if a stochastic optimization algorithm can satisfy both condition 1 and condition 2, it will converge to global optimality. In essence, the first condition (Condition 1) can guarantee that the fitness value $f(x)$ of the stochastic optimization algorithm is decreasing. From the above discussions, we know that, in the iterative process of the BA algorithm, it is obvious that $f(p) \leq f(x_t)$

Furthermore, the previous theorem means that the group state sequence will converge towards the optimal set after a sufficiently large number of iterations, which means that the probability of not reaching the globally optimal solution is asymptotically zero. This means that the second convergence condition is also satisfied. As a result, a conclusion can be drawn that BA has guaranteed global convergence towards its global optimality with a probability one.
\end{proof}

This proof is based on the Markov chain framework, and thus the convergence is in a probabilistic sense. It is an important result because it shows that the bat algorithm can indeed converge. However, there is no information about the rate of convergence and how the parameters may affect the convergence behaviour of this algorithm.

It is worth pointing out that the above proof has been based on a simplified Markov model for the bat algorithm. The standard bat algorithm also includes the variation of pulse emission rate and loudness, which has not been considered here. However, the overall convergence behaviour can be very similar.

In order to gain further insight into the parameter values and their effect on the convergence of the bat algorithm, we now use a completely different approach
to analyze the algorithm in terms of dynamic matrix theory.

\section{Convergence Analysis Based on Dynamic Matrix Theory}

The advantage of algorithm analysis using dynamic matrix theory is that the matrix can be constructed from the updating equations of the algorithm, and the insight can be gained about the possible parameter ranges for the algorithm to converge.

For this purpose and for simplicity of calculations without losing generality,
it is assumed that the current optimal solution in the bat algorithm population is  a constant vector $p$ (even though it is updated at each iteration). It is assumed that the frequency $f_i$ is a constant $m \geq 0$. Within this framework, the velocities and positions of bats during the iterations can be written as
\begin{equation}
v_{k+1}=l v_k+(p-x_k) m, \label{BA-equ-100}
\end{equation}
\begin{equation}
x_{k+1}=c x_k+uv_{k+1}, \label{BA-equ-200}
\end{equation}
where coefficient $m$ is essentially the average of the frequencies, while $l$, $c$ and $u$ are the weight coefficients so that we can analyze the algorithm in general.  The attraction point $p$ in the $d$-dimensional space is the current optimal position. The algorithm represented by the system (\ref{BA-equ-100})
and (\ref{BA-equ-200})
now have four parameters to be tuned. They are $l, m, c, u$.
We will show that two of these parameters are key parameters.

\subsection{Dynamic Matrix Model for the Bat Algorithm}

From the algorithmic equations (\ref{BA-equ-100}) and (\ref{BA-equ-200}), we can rewrite (\ref{BA-equ-100}) equivalently using the previous iteration as
\[ x_k = c x_{k-1} + u v_k, \]
and then multiply its both sides by $l$ and re-arrange slightly, we have
\begin{eqnarray}
l u v_k=l x_k-c l x_{k-1}. \label{BA-equ-300}
\end{eqnarray}
Combining (\ref{BA-equ-100}) and (\ref{BA-equ-200}), we have
\[ x_{k+1}= c x_k + u v_{k+1} = c x_k + u [l v_k +(p-x_k) m] \]
\begin{equation}
=c x_k + u l v_k + u m p - u m x_k
=c x_k + [l x_k - c l x_{k-1} ] + u m p - u m x_k,
\end{equation}
where we have used Eq.(\ref{BA-equ-300}).

By re-arranging the above equation, we have a recursive relationship for $x_k$
\begin{eqnarray}
m u p=x_{k+1}+(m u-c-l)x_k + l c x_{k-1}. \label{BA-equ-400}
\end{eqnarray}
It is obvious that $mu$ always appear as a factor, not individually. This means that only their product matters. Thus, for simplicity (without loss of generality), we can set
\begin{eqnarray}
u \equiv 1.
\end{eqnarray}
Therefore, we have a reduced system of algorithmic equations for the bat algorithm as \begin{eqnarray} v_{k+1} & = & lv_k+m(p-x_k), \\
x_{k+1} & = & c x_k+ v_{k+1}. \end{eqnarray}

In addition, as the number of iterations $k$ increases, it can be expected that the series should converge to $p$ (the current best solution found by all the bats). That is
\begin{equation}
\lim_{k \rightarrow \infty} x_k =p, \quad \lim_{k \rightarrow \infty} x_{k+1} =p,
\quad \lim_{k \rightarrow \infty} x_{k-1} =p.
\end{equation}
Taking the limit of (\ref{BA-equ-400}) and using the above results, we have
\begin{equation}
mp = p + (m-c-l) p + l c p,
\end{equation}
which gives that
\begin{equation}
p (l-1) (c-1)=0.
\end{equation}
Thus, either $p=0$ (a trivial solution or special solution), or $l=1$, or $c=1$. Considering the role of $c$ in the algorithm, we can set $c=1$ for the moment as it does not affect the update of the position vectors.

Now we have finally obtained the reduced dynamic system for the bat algorithm
\begin{eqnarray} v_{k+1} & = & l v_k+m(p-x_k)= -m x_k + l v_k + mp, \\
x_{k+1} & = &   x_k+ v_{k+1}= x_k + [-m x_k + l v_k + mp], \end{eqnarray}
which leads to
\begin{eqnarray}
v_{k+1}& = & -mx_k+lv_k+mp, \\
x_{k+1}& = & x_k+lv_k+mp-mx_k.
\end{eqnarray}

We can rewrite the above dynamic system in a matrix form as
\begin{equation}
Y_{k+1} = C Y_k+M p,
\end{equation}
where
\begin{equation}
Y_k=\begin{bmatrix}
x_k \\ v_k
\end{bmatrix}, \qquad
C=\begin{bmatrix}
1-m & l\\-m & l\\
\end{bmatrix}, \qquad
M=\begin{bmatrix}
m \\ m
\end{bmatrix}.
\end{equation}
Here, the $Y_k$ column vector corresponds to the states of positions and velocities of the bats at iteration $k$. Matrix $C$ is the dynamic matrix that governs the main properties of this dynamic system. $M$ is the input of the frequencies and $p$ is the current best solution in the system.

As the iterations continue and the bat population move towards $p$, the velocity of the bat population will approach zero. That is
\begin{equation} \lim_{k \rightarrow \infty} v_k=0. \end{equation}
Therefore, the final fixed point or point of convergence in the state space $Y_k$ is
\begin{eqnarray}
Y*=\begin{bmatrix}
p \\[5pt] 0
\end{bmatrix}.
\end{eqnarray}
The final state of convergence is that $\lim_{k \rightarrow \infty} x_k=p$ and
$\lim_{k \rightarrow \infty} v_k=0$ if there is no perturbation.

\subsection{Algorithm Convergence and Parameter Selection}
The main properties of the dynamic system is now determined by
the eigenvalues of the dynamic matrix $C$. That is
\begin{equation}
\det\left|
\begin{matrix}
1-m  -\lambda & l \\ -m & l -\lambda
\end{matrix}\right| =0,
\end{equation}
which gives
\begin{equation}
(1-m -\lambda) (l -\lambda) + ml=0,
\end{equation}
or simply
\begin{equation}
\lambda^2 + \lambda (m-l-1) + l=0.
\end{equation}
Thus, their solutions are
\begin{equation}
\lambda=\frac{-(m-l-1) \pm \sqrt{(m-l-1)^2-4l}}{2},
\end{equation}
which gives two eigenvalues $\lambda_1$ and $\lambda_2$.
For the dynamic system to be stable, their modules must be smaller than one $|\lambda| \le 1$. From Vieta's formulas for polynomials,
we know that $\lambda_1 \cdot \lambda_2 = l$
whose modulus should also be less than one, so we have $|l| \le 1$ or $-1 \le 1 \le 1$.

In addition, Vieta's formulas also indicate that
\begin{equation} \lambda_1+\lambda_2=-(m-l-1)=l-m+1 \le 2, \end{equation}
which must be less than 2 (i.e., $\lambda_1+\lambda_2 \le 2$) so that
each eigenvalue is potentially less than 1. We have
\begin{equation} l \le m+1,  \label{lam-eq-40} \end{equation}

For the conditions that the modulus of the biggest eigenvalue must be smaller than one $|\lambda| \le 1$, we have
\begin{equation}
\frac{(l-m+1) \pm \sqrt{(m-l-1)^2-4l}}{2} \le +1, \label{lam-eq-100} \end{equation}
or
\begin{equation} -1 \le \frac{(l-m+1) \pm \sqrt{(m-l-1)^2-4l}}{2}. \label{lam-eq-200}
\end{equation}
The first equation (\ref{lam-eq-100}) becomes
\begin{equation} (l-m-1) \le  \mp \sqrt{(m-l-1)^2 -4l}, \end{equation}
or \begin{equation} (l-m-1)^2 \ge (m-l-1)^2 -4l, \end{equation}
which gives
\begin{equation} (l-m-1)^2 - [(m-1-1)^2-4l] = 4m \ge 0, \end{equation}
or simply $m \ge 0$. Here, we have used the fact $l \le m+1$ (or $l-m-1 \le 0$),
thus the inequality should be reversed when taking the square.

Similarly, the other condition becomes
\begin{equation} -(l-m+3) \le \pm \sqrt{(m-l-1)^2 - 4l}, \end{equation}
or \begin{equation} (l-m+3)^2 \ge (m-l-1)^2 - 4l, \end{equation}
which gives \begin{equation}
(l-m+3)^2 - [(m-l-1)^2-4l] =4 (2l+2 -m) \ge 0, \quad
\textrm{ or } \;\;\; 2l+2 \ge m.  \end{equation}
Therefore, the conditions for stability and convergence are
\begin{equation}\begin{cases}
-1\le l \le +1, \\
m \ge 0, \\
2l-m+2 \ge 0.
\end{cases}
\end{equation}
which form a triangular region in the parameter space of $l$ and $m$, as shown in Fig.~\ref{fig-100}.
\begin{figure}[!htb]
  \centering
  \includegraphics[width=4.5in,height=2.5in]{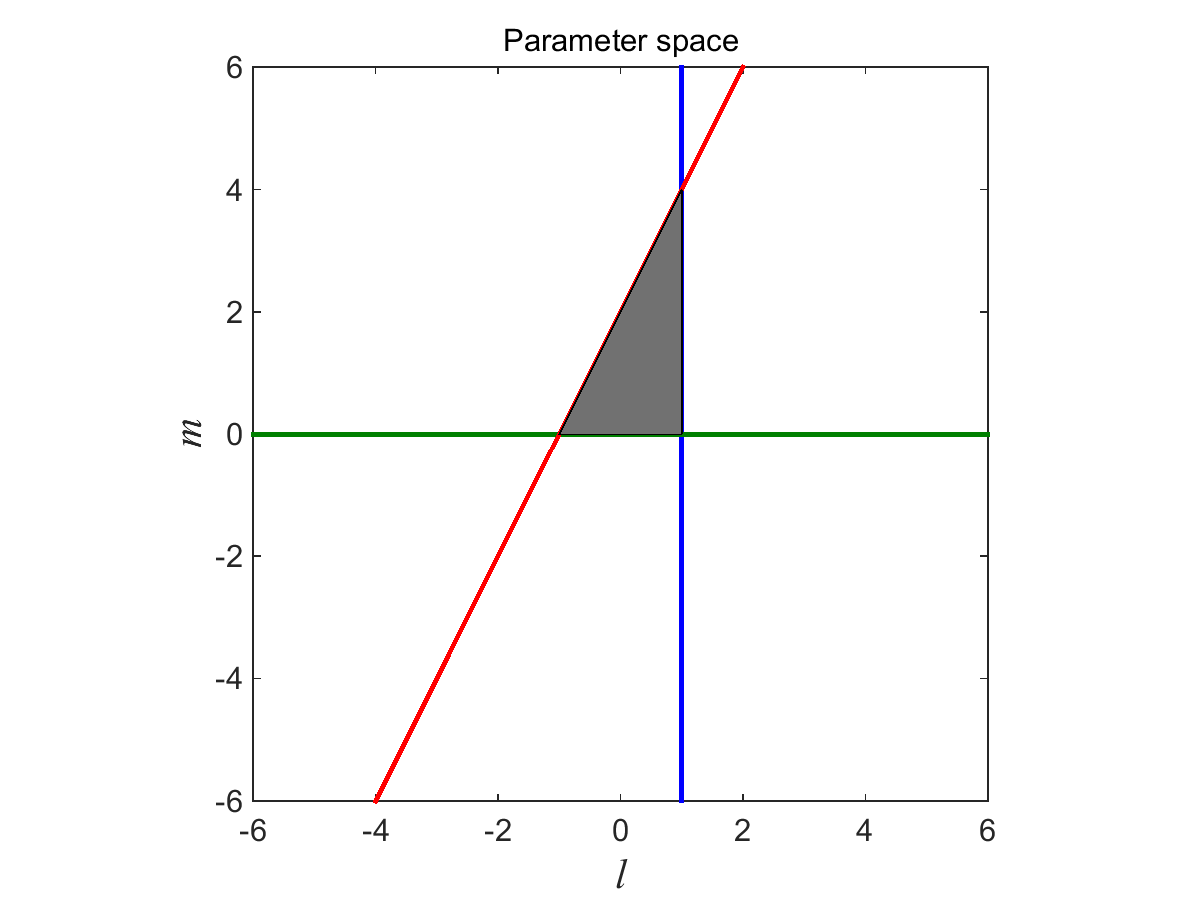}
  \caption{Parameter ranges for the bat algorithm to be stable. \label{fig-100}}
\end{figure}
The above analysis shows that within the parameter ranges of $m$ and $l$, the bat algorithm will not only converge towards the optimality, it will also converge stably. In this case, the algorithm will behave stable and converge quickly in practice. However, it should be emphasized that the dynamic model presented in this paper has not considered the variation of pulse emission rate and loudness, thus the actual parameter ranges may be different from the above results. Even so, this simplified model has enabled us to understand the influence of parameter values for the bat algorithm.

In the rest of the paper, we will use some selected benchmarks to show such convergence properties.

\section{Validation by Numerical Experiments}
In order to verify the BA algorithm and show the convergence characteristics discussed above in this paper, we have conducted some numerical experiments using
a few selected benchmark functions with very difficult properties and modalities.
These functions are listed in Table 1 where the dimensionality is chosen as $D=30$ for all functions.
{\small
\begin{table}[!htb]
\centering
\caption{simulation benchmarks}
\begin{tabular}{c c c c c}
\hline
Function name & Functions & Ranges & $f_{\min}$\\
\hline
Sphere & $f_1(\vec x)=\overset {n}{\underset {i=1}{\sum}}
x_i^2$  & [-5.12,5.12] & 0\\
Griewank & $f_2(\vec x)=\overset {n}{\underset {i=1}{\sum}} \frac {x_i^2}4000-\overset {n}{\underset {i=1}{\prod}} \cos(\frac{x_i}{\sqrt{i}})+1$  & [-600,600] & 0\\
Schwefel & $f_3(\vec x)=\overset {n}{\underset {i=1}{\sum}} |x_i|+\prod _{i=1}^n |x_i| $  &[-10,10] & 0\\
Quartic & $f_4(\vec x)=\overset {n}{\underset {i=1}{\sum}} ix_i^4+rand[0,1]$  & [-100,100] & 0\\
Rosenbrock & $f_5(\vec x)=\overset {n-1}{\underset {i=1}{\sum}}[(x_i-1)^2+100(x_{i+1}-x_i^2)^2]$  & [-5,5] & 0\\
Yang & $f_6(\vec x)=(\overset {n}{\underset {i=1}{\sum}} |x_i| exp[-\overset {n}{\underset {i=1}{\sum}} \sin (x_i^2)])$  & [-2$\pi$,2$\pi$] & 0\\
Zakharov & $f_7(\vec x)=\overset {n}{\underset {i=1}{\sum}} x_i^2+(\overset {n}{\underset {i=1}{\sum}} \frac {ix_i}2)^2+(\overset {n}{\underset {i=1}{\sum}} \frac {ix_i}2)^4$  & [-5,5] & 0\\
Step Function & $f_8(\vec x)=\overset {n}{\underset {i=1}{\sum}} (\lfloor x_i+0.5 \rfloor)^2$  & [-100,100] & 0\\
Rastrigin & $f_9(\vec x)=\overset {n}{\underset {i=1}{\sum}} (x_i^2-10\cos(2 \pi x_i)+10)$ & [-5.12,5.12] & 0\\
\hline
\end{tabular}
\end{table}
}

For each function, the bat algorithm has been executed with a maximum number of iterations $t_{\max}=500$ with a population size $n=12$, $m=2$ and $l=0.5$. The dimensions for all functions are $D=30$.

The convergence plots for all the functions are shown in Fig.~\ref{fig-200}.
\begin{figure}[!htb]
  \centering
  \includegraphics[width=4in,height=3in]{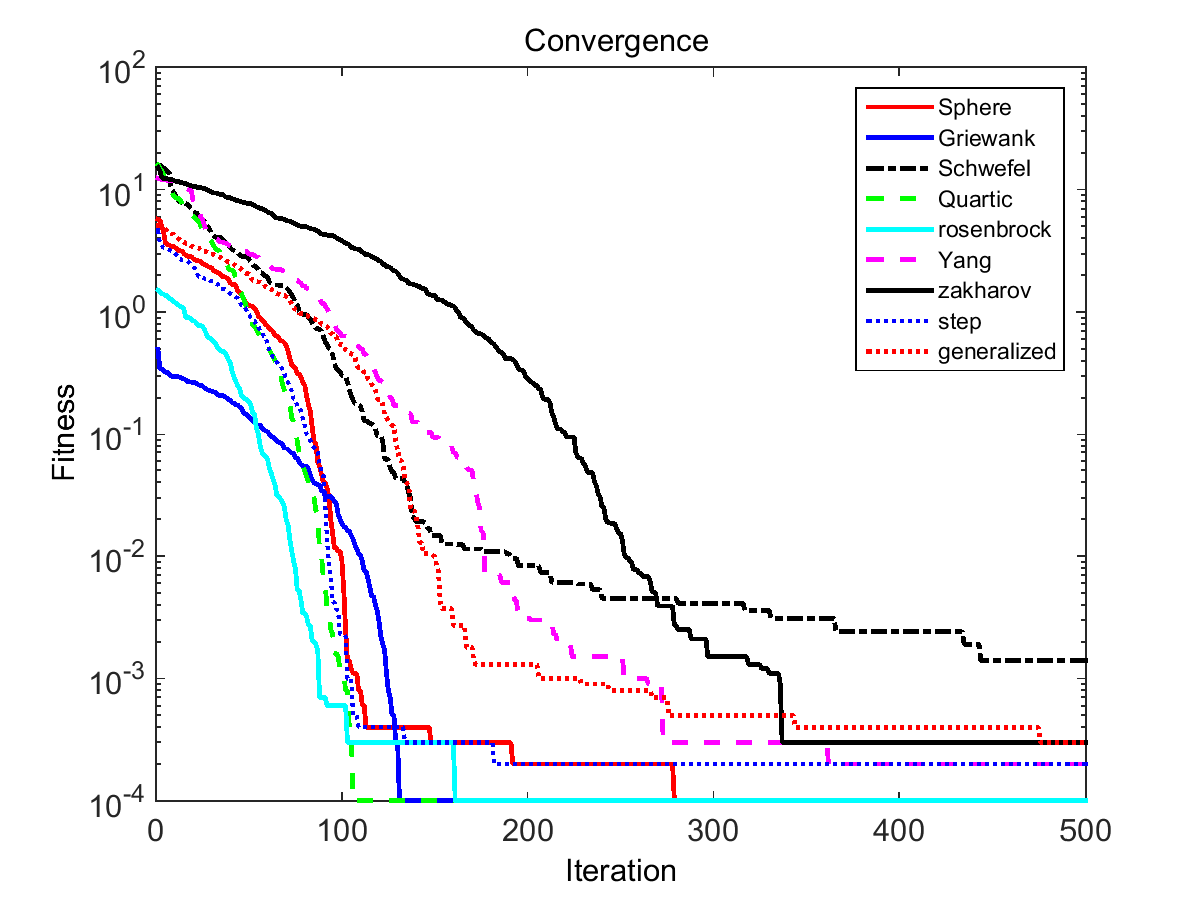}
  \caption{Plots of convergence for various functions. \label{fig-200} }
\end{figure}
It can be observed clearly from this figure that all functions can converge
quickly, specially at the early stage of the iterations. However, if the parameter ranges lie outside the stable domain, the rate of convergence can be significantly lower, and the very slow convergence or even premature convergence can occur
as can see from Fig.~\ref{fig-300} where $m=-3$ and $l=4$ are used, even though all the other parameters remain the same.
\begin{figure}[!htb]
  \centering
  \includegraphics[width=4in,height=3in]{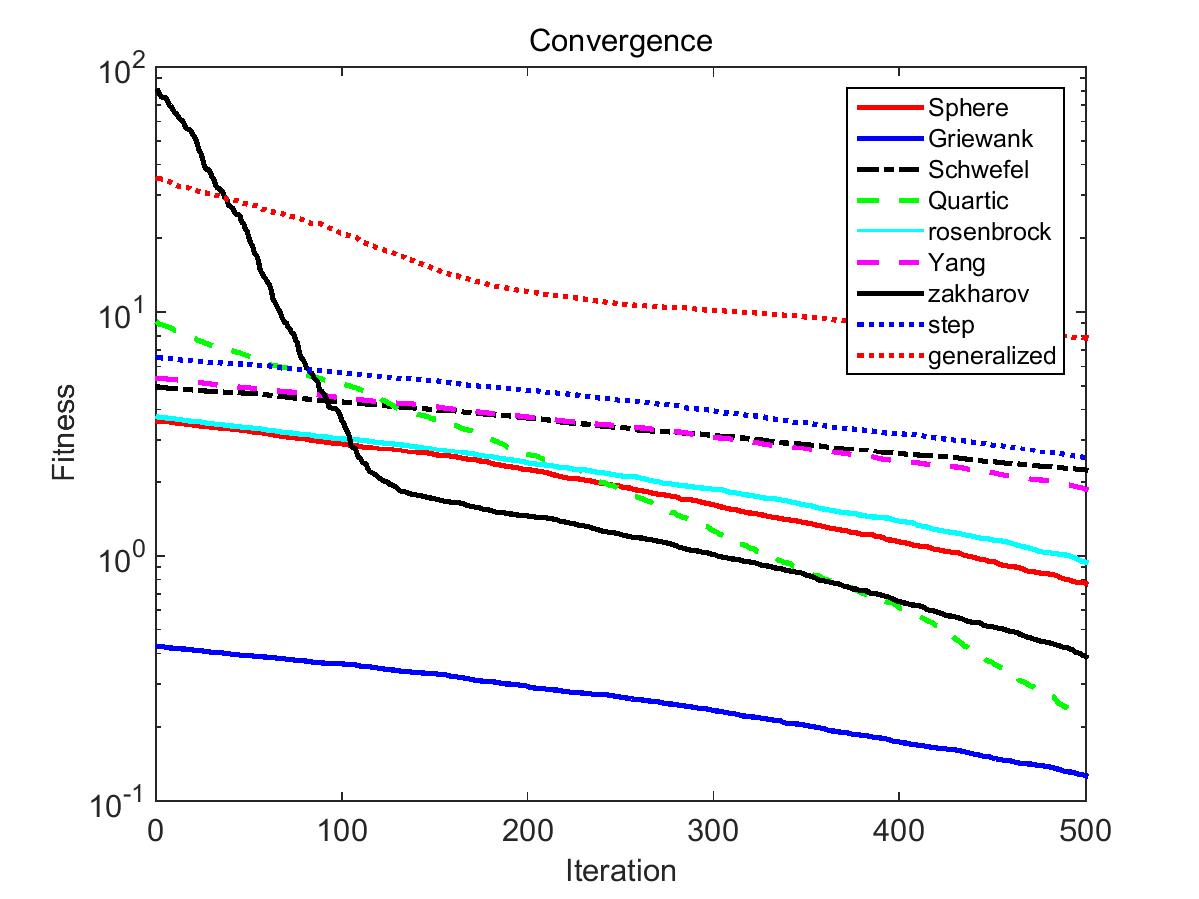}
  \caption{Plot of convergence when the parameters lie outside the triangular domain. \label{fig-300} }
\end{figure}

All the above have demonstrated that the algorithm can converge both quickly and robustly. Thus, the algorithm can be suitable for difficult optimization problems where
optimal or nearly optimal solutions are needed quickly.

\section{Conclusions}

The bat algorithm has been shown to be effective in practice, but there is not theoretical analysis in the literature. This paper provides some theoretical analysis of the standard bat algorithm using both a simplified Markov chain model and a dynamic matrix model. The Markov model shows that the algorithm can converge to the global optimality with probability one as the number of iterations becomes sufficiently large. The dynamic model looks at the algorithm from a different perspective. By extending the models with more parameters, we have then obtained some insight why some parameters are not important, while others can be tuned. As a result, the parameter ranges of some key parameters have been identified.

Following the theoretical analyses, we have used some benchmark test functions to validate the bat algorithm using the appropriate parameters. Good convergence has been observed for all functions, which is consistent with the theoretical results.

It is worth pointing out that the models used in this paper are simplified models
without considering the variation of pulse emission rate and loudness. Future work will try to extend to investigate the effect of such factors in the convergence properties of the bat algorithm. In addition, even we now understand why the bat algorithm converge with a clear parameter region, it still lacks the information on the rate of convergence and how the parameter values will affect the rate of convergence. Future work will also investigate this issue further with more rigorous analyses.

\section{References}


\begin{thebibliography}{100}

\bibitem{Engelbrecht} A. P. Engelbrecht. Computational Intelligence: An Introduction. John Wiley  Sons, 2007:5-24.

\bibitem{Yang} X.S. Yang, Nature-Inspired Optimization Algorithms, Elsevier Insight, London. (2014).

\bibitem{Yang01} X.S. Yang,  Bat Algorithm for Multi-Objective Optimisation. International Journal of Bio-Inspired Computation. 2011, 3(5): 267-274.

\bibitem{Yang02} X.S. Yang, A New Met Heuristic Bat-Inspired Algorithm, Nature Inspired Cooperative Strategies for Optimization. 2010: 65-74.

\bibitem{Gandomi} A. H. Gandomi, X. S. Yang, A. H. Alavi and S. Talatahari. Bat Algorithm for Constrained Optimization Tasks. Neural Computing Applications. 2013, 22(6): 1239-1255.

\bibitem{Natarajan}  A. Natarajan, S. Subramanian and K. Premalatha. A Comparative Study of Cuckoo Search and Bat Algorithm for Bloom Filter Optimisation in Spam Filtering. International Journal of Bio-Inspired Computation. 2012, 4(2): 89-99.


\bibitem{Yang03} X. S. Yang,  A. H. Gandomi. Bat Algorithm: A Novel Approach for Global Engineering Optimization. Engineering Computations. 2012, 29(5): 464-483.

\bibitem{Baziar} A. Baziar, M. Rostami and M. Akbari-Zadeh. An Intelligent Approach Based On Bat Algorithm for Solving Economic Dispatch with Practical Constraints. Journal of Intelligent Fuzzy Systems. 2014,27(3): 1601-1607.

\bibitem{Ramesh} B. Ramesh, V. C. J. Mohan and V. V. Reddy. Application of Bat Algorithm for Combined Economic Load and Emission Dispatch. International Journal of Electricl Engineering and Telecommunications. 2013, 2(1): 1-9.

\bibitem{Clerc} M. Clerc and J. Kennedy, The particle swarm - explosion, stability, and convergence in a multidimensional complex space, IEEE Trans. Evolutionary Computation, 2002, 6 (1):58-73.

\bibitem{Huang} G.Q. Huang, W.J. Zhao, Q.Q. Lu, Bat algorithm with global convergence for solving large-scale optimization problem, Application Research of Computers, 2013,30 (5): 1323-1328.

\bibitem{Jiang} M. Jiang, Y.P. Luo, and S.Y. Yang. Stochastic convergence analysis and parameter selection of the standard particle swarm optimization algorithm, Information Processing Letters, 2007, 102(1): 8-16.

\bibitem{Villa} M. Villalobos-Arias, C.A. Coello Coello, O. Hern\'andez-Lerma, Asymptotic convergence of metaheuristics for multiobjective optimization problems, Soft Computing, 2005, 10(8): 1001-1005.

\bibitem{Hexs} X.S. He, F. Wang, Y. Wang, X.S. Yang, Global convergence analysis of cuckoo search using Markov theory, in: Nature-Inspired Algorithms and Applied Optimization (Eds. X.S. Yang), Springer Nature,
     Cham, Switzerland, pp. 53-67.

\bibitem{Kennedy} J. Kennedy, R.C. Eberhart, R. C., Particle swarm optimization, in: Proc. Of IEEE International Conference on Neural Networks, Piscataway, NJ. 1995, pp. 1942-1948.

\bibitem{Deb} K. Deb, A. Pratap, S. Agarwal and T. Meyarivan. A Fast and Elitist Multiobjective Genetic Algorithm: NSGA-II. IEEE Transactions on Evolutionary Computation. 2002, 6(2): 182-197.

\bibitem{Koziel} S. Koziel, X.S. Yang, Computational Optimization, Methods and Algo-rithms, Springer, Berlin, (2011).

\bibitem{Khan} K. Khan, A. Nikov, A. Sahai,  A fuzzy bat clusterig method for ergonomic screening of office workplaces, S3T 2011, Advances in Intelligent and Soft Computing. 2011: 59-66.

\bibitem{Jain} L. C. Jain, S. C. Tan and C. P. Lim. An Introduction to Computational Intelligence Paradigms. Springer, 2008: 36-75.

\bibitem{Sheng} M.L. Sheng, X.S. He, W.J. Ding, Analysis of bat algorithm's global convergence, Basic Sciences Journal of Textiles Universities,
    2013, 26(4): 543-547.

\bibitem{ZLi} Z.Y. Li, L. Ma, H.Z. Zhang, Convergence Analysis of Bat Algorithm. Mathematical in Practice and Theory, 2013, 43 (12): 182-190.

\bibitem{Kang} M. Kang, J. Kim and J. Kim. Reliable Fault Diagnosis for Incipient Low-Speed Bearings Using Feature Analysis Based On a Binary Bat Algorithm. Information Sciences. 2015, 294: 423-438.

\bibitem{Jaddi} N. S. Jaddi, S. Abdullah and A. R. Hamdan. Optimization of Neural Network Model Using Modified Bat-Inspired Algorithm. Applied Soft Computing. 2015, 37: 71-86.

\bibitem{Tsai} P. W. Tsai, J. S. Pan, B. Y. Liao, M. J. Tsai and V. Istanda. Bat Algorithm Inspired Algorithm for Solving Numerical Optimization Problems. Applied Mechanics and Materials. 2011,148(1):134-137.

\bibitem{Eberhart} R. C. Eberhart and J. Kennedy. A New Optimizer Using Particle Swarm Theory.Proceedings of the 6th International Symposium on Micro Machine and Human Science.
    1995: 39-43.

\bibitem{Akerkar} R. Akerkar and P. S. Sajja. Bio-Inspired Computing: Constituents and Challenges. International Journal of Bio-Inspired Computation. 2009, 1(3): 135-150.

\bibitem{Ren} Z.H. Ren, J. Wang, and Y.L. Gao, The global convergence analysis of particle swarm optimization algorithm based on Markov chain, Control Theory and     Applications (in Chinese), 2012,8(4): 462-466.

\bibitem{Solis} F. Solis, R. Wets, Minimization by random search techniques[J].Mathematics of Operations Research, 1981(6): 19-30.

\bibitem{Mishra} S. Mishra, K. Shaw and D. Mishra. A New Meta-Heuristic Bat Inspired Classification Approach for Microarray Data. Procedia Technology. 2012. 802-806.

\bibitem{Akhtar} S. Akhtar, A. R. Ahmad and E. M. Abdel-Rahman. A Metaheuristic Bat-Inspired Algorithm for Full Body Human Pose Estimation. 2012: 369-375.

\bibitem{Bora} T. C. Bora, L. D. S. Coelho and L. Lebensztajn. Bat-Inspired Optimization Approach for theBrushless DC Wheel Motor Problem. IEEE Transactions on Magnetics. 2012, 48(2): 947-950.

\bibitem{Niknam} T. Niknam, F. Bavafa and R. Azizipanah-Abarghooee. New Self-Adaptive Bat-Inspired Algorithm for Unit Commitment Problem. IET Science Measurement Technology. 2014, 8(6): 505-517.

\bibitem{Wen-xiu} W.X. Zhang, L. Yi, The Mathematical Basis of Genetic Algorithm (in Chinese), Xi'an Jiaotong University Press, 2003.

\bibitem{Leung} Y. Leung, Y. Wang. An Orthogonal Genetic Algorithm with Quantization for Global Numerical Optimization. IEEE Transactions on Evolutionary Computation. 2001, 5(1): 41-53.


\end{thebibliography}
\end{document}